\def\BState{\State\hskip-\ALG@thistlm}
\newtheorem{thm}{Theorem}
\newtheorem{cor}[thm]{Corollary}
\newtheorem{con}[thm]{Conjecture}
\newtheorem{lem}[thm]{Lemma}
\newenvironment{proof}[1][Proof]
{\par\noindent{\bf #1.} }{\hspace*{\fill}\nolinebreak{$\Box$}\bigskip\par}
\title{\bf Restricted size Ramsey number for $P_3$ versus cycle}
\author{
Joanna Cyman\\
\small Gda\'nsk University of Technology \\[-0.8ex]
\small Department of Technical Physics and Applied Mathematics \\[-0.8ex]
\small Narutowicza 11/12, 80-952 Gda\'nsk, Poland\\[-0.8ex]
\small \texttt{joanna.cyman@pg.edu.pl}\\
\\[-1.5ex]
and\\[-1.5ex]
\\
Tomasz Dzido\\
\small Institute of Informatics, University of Gda\'{n}sk \\[-0.8ex]
\small Wita Stwosza 57, 80-952 Gda\'{n}sk, Poland\\[-0.8ex]
\small \texttt{tdz@inf.ug.edu.pl}\\
}
\begin{document}
\maketitle
\begin{abstract}

\noindent
Let $F$, $G$ and $H$ be simple graphs. We say $F \rightarrow (G, H)$ if for every \linebreak  $2$-coloring of the edges of $F$ there exists a red copy of $G$ or a blue copy of $H$ in $F$. The Ramsey number $r(G, H)$ is defined as $r(G, H) = min\{|V (F)|: F \rightarrow (G, H)\}$, while the restricted size Ramsey number $r^{*}(G, H)$ is defined as $r^{*}(G, H) = min\{|E (F)|: F \rightarrow (G, H) , |V (F) | = r(G, H)\}$. In this paper we determine previously unknown restricted size Ramsey numbers $r^{*}(P_3, C_n)$ for $7 \leq n \leq 12$. We also give new upper bound $r^{*}(P_3, C_n) \leq 2n-2$ for $n \geq 10$ and $n$ is even.
\end{abstract}

\section{Introduction}

\noindent Paul Erd\H os had a tremendous impact on many areas of mathematics, one of these areas is Ramsey theory.  His contributions started with the classical Ramsey numbers $r(G,H)$.  In 1978 Erd\H os  \emph{et al.} in  \cite{efrs} defined  the \emph{size Ramsey number} $\hat{r}(G,H)$ as the smallest size of a graph $F$ such that, under any 2-coloring of its edges, the graph $F$ contains a red copy of $G$ or a blue copy of $H$. In \cite{fs} one can find a survey of results along with the influence of Paul Erd\H os on the development of size Ramsey theory.

The \emph{restricted size Ramsey number} $r^{*}(G,H)$ is a problem connecting Ramsey number and size Ramsey number.  For the restricted size Ramsey number, if $r$ is the Ramsey number of $G$ and $H$ then $F$ must be a spanning subgraph of $K_r$ with the smallest size such that for any 2-coloring of edges of $F$ we have a red copy of $G$ or a blue copy of $H$ in $F$.  Therefore, the size of $K_r$ is the upper bound for the restricted size Ramsey number of $G$ and $H$ and the restricted size Ramsey number must be greater or equal to the size Ramsey number for a given pair of graphs. In addition, we have $\tilde{r}(G,H) \leq \hat{r}(G,H)$, where $\tilde{r}(G,H)$ is the on-line Ramsey number.
If both $G$ and $H$ are complete graphs then $F=K_r$ (see \cite{efrs}). The case of complete graph is one of a few cases for which that upper bound is reached. In general, the more sparse both graphs $G$ and $H$ are, the problem of finding the restricted size Ramsey number for those pair of graphs is harder. Only two results for the exact value of restricted size Ramsey number involving a class of graph known so far, that are, for $K_{1,k}$ versus $K_n$ \cite{fsh} and $G$ versus $K_{1,k}$, where $G$ is $K_3$, $K_4-e$, or $C_5$ \cite{ef}. For other few classes of graphs, the problem is solved partially.

Some results for size Ramsey number was presented by Faudree and Schelp in 2002 \cite{fs}. It had shown that $r^{*}(P_3,C_3)=8$, $r^{*}(P_3,C_4)=6$, $r^{*}(P_3,C_5)=9$. In 2015 Silaban \emph{et al.} proved the last known exact value, namely $r^{*}(P_3,C_6)=9$ \cite{dbu1}. In addition, they give  lower and upper bound for $r^{*}(P_3,C_n)$, where $n \geq 8$ is even (see below Theorem \ref{tw3}). In this paper, we determine previously unknown restricted size Ramsey numbers, namely $r^{*}(P_3, C_n)$ for $7 \leq n \leq 12$, and we improve the upper bound for $r^{*}(P_3,C_n)$, that is we prove that $r^{*}(P_3, C_n) \leq 2n-2$ for $n \geq 10$ and $n$ is even.

For notation and graph theory terminology we in general follow \cite{dbu1}.

\section{Known results}

In this section, we list a few known definitions and theorems that we will need in proving our results.

The \emph{Tur{\'a}n number} $ex(n,G)$ is the maximum number of edges in any $n$-vertex graph which does not contain a subgraph isomorphic to $G$.  A graph on $n$ vertices is said to be \emph{extremal with respect to $G$} if it does not contain a subgraph isomorphic to $G$ and has exactly $ex(n,G)$ edges.

In 1989 Clapham \emph{et al.} \cite{cfs} determined all values of $ex(n,C_4)$ for $n \leq 21$. They also characterized all the corresponding extremal graphs. In Theorem 1 we quote value for $ex(7, C_4)$ and we show (Figure \ref{rys1}) five corresponding extremal graphs.
We will use this in the proof of Theorem~5.

\begin{thm}[\cite{cfs}]
$$ex(7, C_4)=9$$ and there are 5 extremal graphs for this number illustrated in Figure \ref{rys1}.
\label{tw2}
\end{thm}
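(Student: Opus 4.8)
The plan is to prove the equality $ex(7,C_4)=9$ by matching an upper bound against an explicit construction, and then to obtain the list of extremal graphs by a direct enumeration. For the upper bound I would begin with the standard cherry-counting argument: in a $C_4$-free graph the two endpoints of a path of length two determine that path uniquely, since two such paths with the same endpoints would close into a $4$-cycle. Counting paths of length two by their centre therefore gives
$$\sum_{v} \binom{d(v)}{2} \le \binom{7}{2} = 21.$$
Writing $m$ for the number of edges, using $\sum_v d(v)=2m$ and Cauchy--Schwarz $\sum_v d(v)^2 \ge (2m)^2/7$, this becomes $2m^2/7 - m \le 21$, hence $m \le 10$. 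So the only value left to exclude is $m=10$.

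To rule out $m=10$ I would argue structurally. First, a vertex of degree $5$ or $6$ is easy to dispose of: its neighbours already share it as a common neighbour, so no two of them may share a second one, which forces the rest of the graph to be very sparse and pushes $m$ below $10$. With maximum degree at most $4$, the counting inequality leaves only a short list of degree sequences summing to $20$ (namely $(4,3,3,3,3,2,2)$, $(3,3,3,3,3,3,2)$ and $(4,4,3,3,2,2,2)$), and each of these has a vertex of degree $2$. Deleting such a vertex leaves a $C_4$-free graph on $6$ vertices with at least $10-2=8>7$ edges, contradicting the smaller Tur\'an number $ex(6,C_4)=7$ (itself proved by the same counting-and-deletion scheme). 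Hence $ex(7,C_4)\le 9$. For the matching lower bound I would exhibit the friendship (windmill) graph $F_3$, consisting of three triangles sharing a common vertex: it has $7$ vertices and $9$ edges, and since every cycle must pass through the central cut-vertex, it contains no $C_4$. This gives $ex(7,C_4)=9$.

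It then remains to determine all extremal graphs, that is, all $C_4$-free graphs on $7$ vertices with exactly $9$ edges. Here I would organise the search by the degree sequence, which must sum to $18$ and satisfy $\sum_v \binom{d(v)}{2}\le 21$; this already restricts the admissible sequences to a manageable list (for instance $(6,2,2,2,2,2,2)$, $(4,3,3,2,2,2,2)$, $(3,3,3,3,2,2,2)$, and a few more). For each admissible sequence I would reconstruct the possible graphs vertex by vertex, repeatedly invoking the rule that any two vertices have at most one common neighbour to prune partial configurations, and finally discard isomorphic duplicates until exactly five classes remain.

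I expect the main obstacle to be this last enumeration. The counting bound alone does not even pin down $m$ (it permits $m=10$), so the whole upper bound rests on the structural deletion step, and the characterisation rests on careful but unavoidable case analysis. The principal risk is either overlooking an admissible configuration or wrongly merging two non-isomorphic graphs, so the difficulty lies in the bookkeeping rather than in any single idea — precisely the kind of exhaustive verification carried out in \cite{cfs}.
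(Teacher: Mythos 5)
First, note that the paper does not prove this statement at all: Theorem \ref{tw2} is quoted from Clapham, Flockhart and Sheehan \cite{cfs}, so there is no internal proof to compare against and your argument must stand entirely on its own. Its skeleton (cherry counting, structural elimination of $m=10$, the friendship graph $F_3$ as the extremal construction -- which is indeed the graph $G_1$ of Figure \ref{rys1}) is the right kind of argument, but it has a genuine gap at the step it leans on most heavily. You justify $ex(6,C_4)=7$ by asserting it follows from ``the same counting-and-deletion scheme,'' and it does not. For $n=6$ the counting bound gives $m\leq 8$, and a hypothetical $8$-edge graph has degree sum $16<18$, hence a vertex of degree at most $2$; but deleting a degree-$2$ vertex leaves $6$ edges on $5$ vertices, which equals $ex(5,C_4)=6$ exactly -- no contradiction. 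To close this one must show that the bowtie (two triangles sharing a vertex) is the \emph{unique} $5$-vertex extremal graph and that no degree-$2$ vertex can be attached to it without creating a $C_4$ (or run a direct case analysis on the degree sequences $(3,3,3,3,3,1)$ and $(3,3,3,3,2,2)$). So the recursion does not bottom out by itself; the base case needs real work or an explicit citation. A smaller slip: your list of admissible degree sequences for $m=10$ omits $(4,3,3,3,3,3,1)$, whose cherry count is exactly $21$. This happens not to matter -- deleting its degree-$1$ vertex gives an even stronger contradiction -- and in fact the whole enumeration is unnecessary, since degree sum $20<3\cdot 7$ already forces a vertex of degree at most $2$.

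The second half of the theorem, that there are exactly five extremal graphs, is not proved in your proposal at all: you describe a plan (restrict degree sequences, rebuild graphs vertex by vertex, discard isomorphs) and correctly identify it as the hard part, but none of the case analysis is carried out, so the claim that exactly five isomorphism classes survive is unsupported. Since the characterization is precisely what the paper later uses (Lemma \ref{lem1} plus Theorem \ref{tw2} reduce the $m=12$ case of Theorem \ref{twC7} to checking the five graphs $G_1,\dots,G_5$), this part cannot be waved at; either the enumeration must be executed or the statement must simply be cited from \cite{cfs}, as the paper does.
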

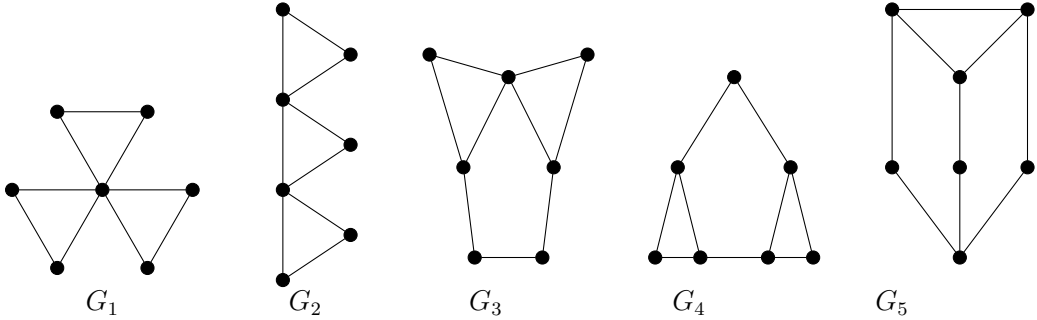
\begin{figure}

\begin{tikzpicture}[scale=.15, transform shape]
\tikzstyle{every node}=[draw, circle, minimum size=33 pt, fill=black];
\node (v0) at (0:0) {};
\node (v1) at (   0:8) {} ;
\node (v2) at (  60:8) {};
\node (v3) at (120:8) {};
\node (v4) at (180:8) {};
\node (v5) at (240:8) {};
\node (v6) at (300:8) {};
\draw (v0) -- (v1)
(v0) -- (v2)
(v0) -- (v3)
(v0) -- (v4)
(v0) -- (v5)
(v0) -- (v6)
(v1) -- (v6)
(v2) -- (v3)
(v4) -- (v5);

\node (v7) at (0:16) {};
\node (v8) at (22,4) {} ;
\node (v9) at (16,8) {};
\node (v10) at (22,12) {};
\node (v11) at (16,16) {};
\node (v12) at (22,-4) {};
\node (v13) at (16,-8) {};
\draw (v7) -- (v13)
(v7) -- (v8)
(v7) -- (v9)
(v9) -- (v10)
(v9) -- (v11)
(v7) -- (v12)
(v8) -- (v9)
(v10) -- (v11)
(v12) -- (v13);

\node (v14) at (32,2) {};
\node (v15) at (40,2) {} ;
\node (v16) at (29,12) {};
\node (v17) at (36,10) {};
\node (v18) at (43,12) {};
\node (v19) at (33,-6) {};
\node (v20) at (39,-6) {};
\draw (v14) -- (v16)
(v14) -- (v17)
(v15) -- (v17)
(v15) -- (v18)
(v16) -- (v17)
(v17) -- (v18)
(v14) -- (v19)
(v19) -- (v20)
(v15) -- (v20);

\node (v21) at (49,-6) {};
\node (v22) at (53,-6) {} ;
\node (v23) at (59,-6) {};
\node (v24) at (63,-6) {};
\node (v25) at (51,2) {};
\node (v26) at (61,2) {};
\node (v27) at (56,10) {};
\draw (v21) -- (v22)
(v22) -- (v23)
(v23) -- (v24)
(v21) -- (v25)
(v22) -- (v25)
(v23) -- (v26)
(v24) -- (v26)
(v25) -- (v27)
(v26) -- (v27);

\node (v28) at (76,-6) {};
\node (v29) at (70,2) {} ;
\node (v30) at (76,2) {};
\node (v31) at (82,2) {};
\node (v32) at (76,10) {};
\node (v33) at (70,16) {};
\node (v34) at (82,16) {};
\draw (v28) -- (v29)
(v28) -- (v30)
(v28) -- (v31)
(v30) -- (v32)
(v31) -- (v34)
(v32) -- (v33)
(v32) -- (v34)
(v33) -- (v34)
(v29) -- (v33);

\tikzstyle{every node}=[x=1cm, y=1cm];

\draw[color=black] (0,-10) node[scale=6.0] {$G_1$};
\draw[color=black] (18,-10) node[scale=6.0] {$G_2$};
\draw[color=black] (34,-10) node[scale=6.0] {$G_3$};
\draw[color=black] (52,-10) node[scale=6.0] {$G_4$};
\draw[color=black] (70,-10) node[scale=6.0] {$G_5$};


\end{tikzpicture}
\caption{All extremal graphs for $ex(7,C_4)$.}
\label{rys1}
\end{figure}


\medskip
In our work we will also use a well known the Ramsey number for paths and cycles that was calculated by Faudree \emph{et al.} in \cite{flps}.

\begin{thm}[\cite{flps}]
For all integers $n \geq 4$,
$$r(P_3,C_n)= n.$$
\label{tw1}
\end{thm}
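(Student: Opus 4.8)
The plan is to prove the two inequalities $r(P_3,C_n)\geq n$ and $r(P_3,C_n)\leq n$ separately, working throughout with the arrow relation for the complete graph: $K_N\rightarrow(P_3,C_n)$ means that every red/blue coloring of the edges of $K_N$ contains a red $P_3$ or a blue $C_n$. The single observation I would exploit at the outset is that a coloring contains no red $P_3$ if and only if the set of red edges forms a matching, since a $P_3$ is precisely a pair of edges sharing a vertex. Hence ``no red $P_3$'' forces every vertex to be incident with at most one red edge.

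For the lower bound, I would note that $C_n$ has $n$ vertices, so $K_{n-1}$ cannot contain $C_n$ as a subgraph under any coloring whatsoever. Coloring all edges of $K_{n-1}$ blue then produces neither a red $P_3$ (there are no red edges) nor a blue $C_n$ (too few vertices), giving $K_{n-1}\not\rightarrow(P_3,C_n)$ and therefore $r(P_3,C_n)\geq n$.

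For the upper bound I would take an arbitrary $2$-coloring of $K_n$ that avoids a red $P_3$ and manufacture a blue $C_n$. By the observation above the red edges form a matching, so each vertex loses at most one incident edge when we pass to the blue subgraph $B$; consequently the minimum degree satisfies $\delta(B)\geq(n-1)-1=n-2$. For $n\geq 4$ this yields $\delta(B)\geq n/2$, and Dirac's theorem then guarantees that $B$ is Hamiltonian, i.e.\ contains a spanning cycle, which is a blue $C_n$. This establishes $K_n\rightarrow(P_3,C_n)$ and hence $r(P_3,C_n)\leq n$, completing the equality.

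The only delicate point is the Hamiltonicity step, and it is reassuring that the degree bound $n-2\geq n/2$ holds exactly when $n\geq 4$, matching the hypothesis of the theorem; for the boundary value $n=4$ one can alternatively check directly that $K_4$ minus any matching still contains a $4$-cycle. I expect this Dirac-based argument to be the main (and essentially the only) obstacle, everything else following immediately from the matching characterization of red-$P_3$-free colorings.
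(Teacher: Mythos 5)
Your proof is correct. Note, however, that the paper itself gives no proof of this statement: Theorem \ref{tw1} is quoted as a known result from Faudree, Lawrence, Parsons and Schelp \cite{flps}, who determined path--cycle Ramsey numbers $r(P_m,C_n)$ in full generality by a considerably more involved analysis. Your argument is a clean, self-contained derivation of the special case needed here: the lower bound via the all-blue coloring of $K_{n-1}$ is the standard one, and the upper bound correctly combines the observation that a red-$P_3$-free coloring has red edges forming a matching with Dirac's theorem applied to the blue graph, whose minimum degree is at least $n-2\geq n/2$ precisely when $n\geq 4$. The boundary case $n=4$ is handled correctly (Dirac applies there, and your direct check of $K_4$ minus a matching is a fine alternative). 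What your approach buys is independence from the general path--cycle machinery of \cite{flps}; what it costs is nothing for this paper's purposes, since only the $P_3$ case is ever used. One small remark: the matching characterization of red-$P_3$-free colorings that you isolate at the outset is exactly the observation the paper relies on repeatedly in its own proofs (e.g.\ in Theorem \ref{twC7} and Theorem \ref{twg}), so your proof fits naturally into the paper's toolkit.
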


In 2015, Silaban \emph{et al.} \cite{dbu1} proved the lower and the upper bound for the restricted size Ramsey number for $P_3$ and cycles. At the end of our article we improve the upper bound for this number.

\begin{thm}[\cite{dbu1}]
For $n \geq 8$, $n$ is even,
$$\frac{3}{2}n+2 \leq r^{*}(P_3, C_n) \leq 2n-1.$$
\label{tw3}
\end{thm}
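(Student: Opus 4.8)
The plan is to first recast the Ramsey arrow in a purely structural form. Since $r(P_3,C_n)=n$ by Theorem~\ref{tw1}, any certifying graph $F$ has exactly $n$ vertices, so a blue $C_n$ is a \emph{Hamiltonian} cycle of the blue subgraph, while a red $P_3$ is simply two red edges sharing a vertex. Hence avoiding a red $P_3$ means the red edges form a matching, and $F\to(P_3,C_n)$ is equivalent to the statement that for every matching $M\subseteq E(F)$ the graph $F-M$ has a Hamiltonian cycle. Both bounds can be proved through this reformulation, keeping $|V(F)|=n$ throughout.

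For the lower bound I would first extract a degree condition. If $\deg_F(v)=2$, then coloring one edge at $v$ red (a legal matching) leaves $v$ with blue-degree $1$, so no blue Hamiltonian cycle passes through $v$; thus $\delta(F)\ge 3$ and $|E(F)|\ge \tfrac{3}{2}n$. To gain the extra $+2$ I would rule out the degree sums $3n$ and $3n+2$. Taking $M=\emptyset$ shows $F$ is Hamiltonian; fix a Hamiltonian cycle $H=v_1v_2\cdots v_n$. When $F$ is cubic the chords off $H$ form a perfect matching, and removing the alternating set $\{v_1v_2,v_3v_4,\dots\}$ (a matching since $n$ is even) turns $F$ into a $2$-factor. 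The goal is then to show that, for \emph{some} choice of Hamiltonian cycle and alternating set, this $2$-factor splits into at least two cycles and is therefore non-Hamiltonian, exactly the phenomenon the M\"obius ladder exhibits. An analogous but more delicate version of the same argument, tracking the one or two vertices of excess degree, disposes of the degree sum $3n+2$.

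For the upper bound I would exhibit a concrete graph on $n$ vertices with $2n-1$ edges, taking the circulant-type graph $C_n(1,2)$ (the Hamiltonian cycle $v_1\cdots v_n$ together with all second-neighbour chords $v_iv_{i+2}$) and deleting one carefully chosen chord. The verification again uses the reformulation: given a matching $M$, let $R$ be the set of cycle edges $v_iv_{i+1}$ lying in $M$. Because $M$ is a matching these edges are pairwise non-adjacent, hence spread at least two apart along $H$, and each can be bypassed by a short local detour through the nearby chords, for instance replacing the segment $v_{i-1},v_i,v_{i+1},v_{i+2}$ by $v_{i-1},v_i,v_{i+2},v_{i+1},v_{i+3}$; every chord incident to $v_i$ or $v_{i+1}$ automatically lies outside $M$, so it is available. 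Splicing these detours together should yield a Hamiltonian cycle of $F-M$.

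The main obstacle lies on both ends but is sharpest in the fine-tuning. In the lower bound, proving that \emph{every} cubic (and every excess-$2$) graph with the robustness property fails, rather than merely exhibiting one bad example, requires showing that the alternating removal can always be arranged to disconnect the resulting $2$-factor, which needs a genuine case analysis rather than a single clean inequality. In the upper bound, the detours overlap when two deleted cycle edges sit at distance exactly two, and the deleted chord further weakens one region; resolving these collisions, together with the wrap-around at the ends of $H$, by grouping nearby detours and rerouting them jointly is the part that demands care. Establishing that the combined detours never conflict is where I expect the bulk of the bookkeeping to go.
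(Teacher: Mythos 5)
First, a point of context: the paper does not prove this statement at all --- Theorem~\ref{tw3} is quoted from Silaban \emph{et al.}~\cite{dbu1} --- so your proposal can only be judged on its own merits, not against an in-paper argument. Your reformulation is correct (and is the same one the paper exploits in its Algorithm~1): since $r(P_3,C_n)=n$, avoiding a red $P_3$ means the red edges form a matching $M$, and $F\to(P_3,C_n)$ iff $F-M$ is Hamiltonian for every matching $M\subseteq E(F)$. The minimum-degree step, $\delta(F)\ge 3$ and hence $|E(F)|\ge\frac{3}{2}n$, is also sound.

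Both halves, however, have genuine gaps. The entire content of the lower bound beyond the trivial $\frac{3}{2}n$ is the ``$+2$'', and your plan for it --- show that for some Hamiltonian cycle of a cubic $F$ the removal of an alternating half of its edges leaves a disconnected $2$-factor --- is not only left undone but cannot be completed as stated, because it nowhere uses the hypothesis $n\ge 8$. For $n=6$ the claim it would establish is false: $K_{3,3}$ is cubic and Hamiltonian, the chords off a Hamiltonian cycle $v_1v_2\cdots v_6$ are $v_1v_4$, $v_2v_5$, $v_3v_6$, and deleting the alternating set $\{v_1v_2,v_3v_4,v_5v_6\}$ leaves the Hamiltonian cycle $v_1v_4v_5v_2v_3v_6v_1$ (the other alternating set behaves the same by symmetry). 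Indeed $K_{3,3}\to(P_3,C_6)$, since every matching of $K_{3,3}$ extends to a perfect matching and $K_{3,3}$ minus a perfect matching is a $6$-cycle; this is exactly why $r^{*}(P_3,C_6)=9=\frac{3}{2}\cdot 6$, as cited in the paper's introduction. So any correct proof of the $+2$ must identify where $n\ge 8$ enters, and your sketch gives no indication of this; the degree-sum-$(3n+2)$ case is likewise only asserted to be ``analogous but more delicate''. On the upper-bound side, your candidate $C_n(1,2)$ minus one chord does have the right size $2n-1$ (and ``carefully chosen'' is vacuous, since all chords are equivalent under the circulant's symmetry), and spot checks suggest it may well arrow $(P_3,C_n)$; but the proof is deferred at precisely its only nontrivial point: resolving detour collisions when deleted cycle edges lie at distance two or three, chaining detours when they recur, and routing around the missing chord. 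In sum, what you have is a correct framework plus two unproven --- and in the lower-bound case, incorrectly targeted --- claims, not a proof of the theorem.
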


\section{New results}

In order to find the value of $r^{*}(P_3,C_n)$, we must find a graph $F$ with the smallest possible size such that $F \rightarrow (P_3, C_n)$. According to Theorem \ref{tw1} the graph $F$ must have $n$ vertices.

\subsection{Determining the value of $r^{*}(P_3,C_7)$}

First, we give the following condition for graph $F$ satisfying $F \rightarrow (P_3, C_7)$.

\begin{lem}
Let $F$ be a graph with $|V(F)|=7$ and $C_4 \subseteq \overline{F}$, then $F \nrightarrow (P_3, C_7)$.
\label{lem1}
\end{lem}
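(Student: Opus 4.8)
The plan is to unwind the arrow relation into a concrete colouring task. Since $F$ has exactly seven vertices, a blue copy of $C_7$ is precisely a Hamiltonian cycle of $F$ all of whose edges are blue, and a red copy of $P_3$ is a pair of red edges sharing an endpoint. Hence a red/blue colouring of $E(F)$ witnesses $F \nrightarrow (P_3, C_7)$ exactly when the red edges form a matching (no two share a vertex, so no red $P_3$) while the blue subgraph, which is $F$ with that matching deleted, contains no Hamiltonian cycle. So it suffices to exhibit a matching $M \subseteq E(F)$ whose deletion destroys every Hamiltonian cycle of $F$, colour $M$ red and everything else blue.

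To build $M$ I would use the hypothesis directly. Write the $4$-cycle of $\overline F$ with vertices $a,b,c,d$ in cyclic order, so that $ab,bc,cd,da$ are non-edges of $F$; consequently there is no $F$-edge between $\{a,c\}$ and $\{b,d\}$, and the only possible edges of $F$ inside $\{a,b,c,d\}$ are the two diagonals $ac$ and $bd$. These are vertex-disjoint, so whichever of them actually lie in $E(F)$ form a matching $M$. Colouring $M$ red and the rest blue immediately rules out a red $P_3$, and it remains only to verify that $F-M$ has no Hamiltonian cycle.

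The heart of the argument, and the step I expect to carry the real content, is showing that every Hamiltonian cycle of $F$ must use at least one of $ac,bd$. I would prove this by a cut/degree count across the partition of $V(F)$ into $\{a,b,c,d\}$ and the remaining three vertices $\{e,f,g\}$. Let $x$ be the number of cycle edges inside $\{a,b,c,d\}$ (necessarily among $ac,bd$, since $\{a,c\}$ and $\{b,d\}$ have no joining edges), let $z$ be the number of cycle edges inside $\{e,f,g\}$, and let $y$ be the number of cycle edges joining the two parts. Summing degrees within each part gives $2x+y=8$ and $2z+y=6$, whence $x=z+1\ge 1$, so at least one diagonal is forced into the cycle. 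Once this is established, deleting $M$ (which contains every diagonal that is an edge of $F$) leaves no Hamiltonian cycle in blue, and we are done; in the degenerate case where neither diagonal is an edge the same count shows $x=0$ is impossible, so $F$ has no Hamiltonian cycle at all and colouring every edge blue already works.
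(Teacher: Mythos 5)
Your proposal is correct and follows essentially the same route as the paper: both colour the two diagonals $v_1v_3$, $v_2v_4$ (those present in $F$) red and all remaining edges blue, noting the red edges form a matching. The only difference is that the paper leaves the nonexistence of a blue $C_7$ implicit, whereas you verify it explicitly with the degree count across the partition $\{a,b,c,d\}$ versus $\{e,f,g\}$ (equivalently, the blue graph has an independent set of size $4 > 7/2$), which is a welcome completion of the paper's terse argument.
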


\begin{proof}
Suppose there is $F$ with $|V(F)|=7$ such that $\overline{F}$ contains cycle $C_4$, say $v_1, v_2, v_3, v_4, v_1$. By coloring possible edges $v_1v_3$, $v_2v_4$ $\in E(F)$ in red and the remaining edges of $F$ in blue, we obtain a $2$-coloring of $F$ which contains neither a red $P_3$ nor a blue $C_7$.
\end{proof}

\begin{thm}
$r^{*}(P_3,C_7)=13$.
\label{twC7}
\end{thm}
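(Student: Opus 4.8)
The plan is to prove the two bounds $r^*(P_3,C_7)\ge 13$ and $r^*(P_3,C_7)\le 13$ separately, after recasting the arrowing relation. A red $P_3$ is exactly two red edges meeting at a vertex, so a $2$-coloring of $F$ avoids a red $P_3$ if and only if the red edges form a matching $M\subseteq E(F)$; since $|V(F)|=7$ by Theorem~\ref{tw1}, a blue $C_7$ is precisely a Hamiltonian cycle of the blue graph $F-M$. Hence $F\rightarrow(P_3,C_7)$ holds if and only if $F-M$ is Hamiltonian for \emph{every} matching $M\subseteq E(F)$, while $F\nrightarrow(P_3,C_7)$ is witnessed by a single matching $M$ for which $F-M$ has no spanning cycle. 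Note that on $7$ vertices a matching has at most three edges, so each vertex loses at most one unit of degree. All arguments below are phrased this way.

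For the lower bound I would show that no graph $F$ on $7$ vertices with $|E(F)|\le 12$ arrows $(P_3,C_7)$, by passing to the complement. If $|E(F)|\le 11$ then $|E(\overline F)|\ge 10>9=ex(7,C_4)$, so by Theorem~\ref{tw2} the graph $\overline F$ contains a $C_4$ and Lemma~\ref{lem1} gives $F\nrightarrow(P_3,C_7)$. It remains to treat $|E(F)|=12$, i.e. $|E(\overline F)|=9$: if $\overline F$ contains a $C_4$ we again apply Lemma~\ref{lem1}, and otherwise $\overline F$ is $C_4$-free with exactly $ex(7,C_4)=9$ edges, hence one of the five extremal graphs $G_1,\dots,G_5$ of Figure~\ref{rys1}, so it suffices to refute $F=\overline{G_i}$ for each $i$. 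Three of these are immediate: when $G_i$ has a vertex of degree at least $4$, equivalently $F$ has a vertex $w$ with $\deg_F(w)\le 2$, a matching of at most one edge incident with $w$ leaves $w$ of degree at most $1$, so $F-M$ is not Hamiltonian. The remaining two extremal graphs have maximum degree $3$ (so $F$ has minimum degree $3$) and resist this count; for them I would choose a matching $M$ of two independent edges whose deletion leaves two distinct vertices $u,v$, each of degree $2$ in $F-M$ with $N_{F-M}(u)=N_{F-M}(v)=\{x,y\}$. Then the four edges $ux,uy,vx,vy$ are forced into any Hamiltonian cycle and close into the $4$-cycle $u\,x\,v\,y\,u$, saturating all of $u,v,x,y$; this $4$-cycle cannot be a spanning $7$-cycle, so $F-M$ is non-Hamiltonian. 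A short inspection supplies the pair $(u,v)$ in each graph, completing $r^*(P_3,C_7)\ge 13$.

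For the upper bound I would exhibit one explicit graph $F_0$ on $7$ vertices with $13$ edges and check $F_0\rightarrow(P_3,C_7)$. By Lemma~\ref{lem1} the complement must be $C_4$-free, so I take $\overline{F_0}$ to be the $8$-edge $C_4$-free graph consisting of two vertex-disjoint triangles $\{a_1,a_2,a_3\}$ and $\{b_1,b_2,b_3\}$ together with a seventh vertex $c$ joined to $a_1$ and to $b_1$. In $F_0$ every vertex then has degree $4$ except $a_1$ and $b_1$, which have degree $3$. Because a matching lowers each degree by at most one, at most two vertices (namely $a_1,b_1$) can drop to degree $2$ in $F_0-M$, and their residual neighbourhoods are disjoint apart from the edge $a_1b_1$, so no forced $4$-cycle of the type used above can arise. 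It then remains to verify that $F_0-M$ is Hamiltonian for every matching $M$; exploiting this degree structure together with the evident automorphisms (swapping the two triangles and permuting $a_2,a_3$ and $b_2,b_3$), the check reduces to a small number of cases, in each of which I would display an explicit spanning $7$-cycle avoiding the edges of $M$. This gives $r^*(P_3,C_7)\le 13$, and with the lower bound yields $r^*(P_3,C_7)=13$.

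The main obstacle is twofold and lies precisely where degree counting fails. In the lower bound, the two maximum-degree-$3$ extremal complements force the forced-$4$-cycle argument rather than a one-edge deletion; identifying the correct vertex pair and the matching that produces a common degree-$2$ neighbourhood is the delicate step. In the upper bound, the real work is confirming robustness of $F_0$ under deletion of every matching, that is, ruling out that some choice of up to three independent edges simultaneously blocks all Hamiltonian cycles; the restriction that only $a_1$ and $b_1$ can reach degree $2$, together with the symmetry reduction, is what keeps this verification finite and checkable.
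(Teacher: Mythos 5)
Your proposal is correct and takes essentially the same approach as the paper: the identical lower-bound chain (Lemma~\ref{lem1} plus Theorem~\ref{tw2} to reduce to the five extremal graphs, a degree argument for the three graphs with $\Delta(G_i)\geq 4$, and special matchings for $G_4,G_5$), and your upper-bound graph $F_0$ is isomorphic to the paper's $F_7$, whose complement is exactly two triangles joined through a seventh vertex of degree~$2$. The two steps you leave to inspection do check out --- in $\overline{G_4}$ the pair $u_2,u_5$ has common residual neighbourhood $\{u_4,u_6\}$ after deleting the matching $\{u_2u_7,u_3u_5\}$, and in $\overline{G_5}$ the pair $v_5,v_7$ has common residual neighbourhood $\{v_1,v_2\}$ after deleting $\{v_4v_5,v_3v_7\}$ (the paper's own red matchings realize precisely these forced $4$-cycles) --- and the deferred Hamiltonicity verification of $F_0-M$ for every matching $M$ is the same finite case analysis the paper executes explicitly by pivoting on the degree-$4$ vertex $v_4$ and listing blue $7$-cycles for the five maximum-matching subcases.
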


\begin{proof}
First, we will prove that $r^{*}(P_3,C_7) \geq 13$.  From Lemma \ref{lem1} and Theorem \ref{tw2} we imply that $r^{*}(P_3,C_7) \geq 12$. Suppose that $r^{*}(P_3,C_7) = 12$. Let $F$ be a graph on $7$ vertices and $12$ edges. By Lemma \ref{lem1}, if $F \rightarrow (P_3, C_7)$, then $C_4\nsubseteq \overline{F}$ and therefore $\overline{F}$ is one of the five graphs $G_i$, $1 \leq i \leq 5$ from Figure~\ref{rys1}.
Furthermore, since $\Delta(G_i) \geq 4$ for $i \in \{1,2,3\}$, and by coloring $u_2u_7$, $u_3u_5$, $v_1v_6$, $v_3v_7$, $v_4v_5$ in red (see Figure \ref{rysG4G5}) we obtain, for all $\overline{G_i}$, a $2$-coloring of edges which contains neither a red $P_3$ nor a blue $C_7$. In fact, if $\Delta(G_i) \geq 4$, then there is a vertex of degree at most 2 in $\overline{G_i}$. To avoid a blue $C_7$ we color in red one edge coming out of this vertex (if any). Hence, $F \nrightarrow (P_3, C_7)$ and consequently  we have $r^{*}(P_3,C_7) \geq 13$.

\begin{figure}
\begin{center}
\begin{tikzpicture}[scale=.15, transform shape]
\tikzstyle{every node}=[draw, circle, minimum size=33 pt, fill=black];

\node (v21) at (49,-6) {};
\node (v22) at (53,-6) {} ;
\node (v23) at (59,-6) {};
\node (v24) at (63,-6) {};
\node (v25) at (51,2) {};
\node (v26) at (61,2) {};
\node (v27) at (56,10) {};
\draw (v21) -- (v22)
(v22) -- (v23)
(v23) -- (v24)
(v21) -- (v25)
(v22) -- (v25)
(v23) -- (v26)
(v24) -- (v26)
(v25) -- (v27)
(v26) -- (v27);

\node (v28) at (76,-6) {};
\node (v29) at (70,2) {} ;
\node (v30) at (76,2) {};
\node (v31) at (82,2) {};
\node (v32) at (76,10) {};
\node (v33) at (70,16) {};
\node (v34) at (82,16) {};
\draw (v28) -- (v29)
(v28) -- (v30)
(v28) -- (v31)
(v30) -- (v32)
(v31) -- (v34)
(v32) -- (v33)
(v32) -- (v34)
(v33) -- (v34)
(v29) -- (v33);

\tikzstyle{every node}=[x=1cm, y=1cm];

\draw[color=black] (56,-11) node[scale=6.0] {$G_4$};
\draw[color=black] (76,-11) node[scale=6.0] {$G_5$};

\draw[color=black] (49,-8) node[scale=5.0] {$u_1$};
\draw[color=black] (53,-8) node[scale=5.0] {$u_2$};
\draw[color=black] (59,-8) node[scale=5.0] {$u_3$};
\draw[color=black] (63.5,-8) node[scale=5.0] {$u_4$};
\draw[color=black] (49,2) node[scale=5.0] {$u_5$};
\draw[color=black] (63,2) node[scale=5.0] {$u_6$};
\draw[color=black] (58,10) node[scale=5.0] {$u_7$};

\draw[color=black] (76,-8) node[scale=5.0] {$v_1$};
\draw[color=black] (68,2) node[scale=5.0] {$v_2$};
\draw[color=black] (78,2) node[scale=5.0] {$v_3$};
\draw[color=black] (84,2) node[scale=5.0] {$v_4$};
\draw[color=black] (78,10) node[scale=5.0] {$v_5$};
\draw[color=black] (68,16) node[scale=5.0] {$v_6$};
\draw[color=black] (84,16) node[scale=5.0] {$v_7$};

\end{tikzpicture}
\caption{Two extremal graphs $G_4$ and $G_5$ for $ex(7,C_4)$.}
\label{rysG4G5}
\end{center}
\end{figure}
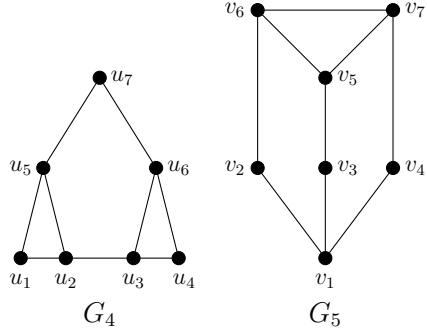

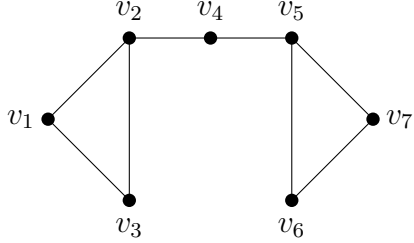
\begin{figure}[htb]
\begin{center}
\begin{tikzpicture}[scale=.18]

\fill [color=black] (4,6) circle (14pt);
\draw[color=black] (2,6) node {$v_1$};
\fill [color=black] (10,0) circle (14pt);
\draw[color=black] (10,-2) node {$v_3$};
\fill [color=black] (10,12) circle (14pt);
\draw[color=black] (10,14) node {$v_2$};
\fill [color=black] (16,12) circle (14pt);
\draw[color=black] (16,14) node {$v_4$};
\fill [color=black] (22,12) circle (14pt);
\draw[color=black] (22,14) node {$v_5$};
\fill [color=black] (22,0) circle (14pt);
\draw[color=black] (22,-2) node {$v_6$};
\fill [color=black] (28,6) circle (14pt);
\draw[color=black] (30,6) node {$v_7$};

\draw (4,6) -- (10,0)
(4,6) -- (10,12)
(10,0) -- (10,12)
(10,12) -- (16,12)
(16,12) -- (22,12)
(22,12) -- (22,0)
(22,12) -- (28,6)
(22,0) -- (28,6);
\end{tikzpicture}
\caption{The complement of the graph $F_7$.} 
\label{rysC7}
\end{center}
\end{figure}

Next, we will show that $r^{*}(P_3,C_7) \leq 13$. Let $F_7$ be the complement of the graph shown in Figure \ref{rysC7}. To prove that $F_7 \rightarrow (P_3,C_7)$, let $\chi$ be any $2$-coloring of edges of $F_7$ such that there is no red $P_3$ in $F_7$. We will show that the coloring~$\chi$ will imply a blue $C_7$ in $F_7$. To do so, consider vertex $v_4$. There are $4$ edges incidence to this vertex, at most one of them can be colored by red. Up to the symmetry of $F_7$, without loss of generality, we can assume that $v_1v_4$ is red or all edges $v_iv_4$, $i \in \{1,3,6,7\}$ are blue. Nonexistence a red $P_3$ forces the red edges to be a matching and that it suffices to consider maximum matchings. Then, using symmetries, there are only five subcases to discuss.

\begin{enumerate}
\item [1.] Edge $v_1v_4$ is red.

\begin{enumerate}
\item [1.1] if $v_2v_5$ and $v_3v_6$ is red, then $v_1, v_5, v_3, v_4, v_6, v_2, v_7, v_1$ is the blue cycle,
\item [1.2] if $v_2v_6$ and $v_3v_5$ are red, then $v_1, v_5, v_2, v_7, v_4, v_3, v_6, v_1$ is the blue cycle,
\item [1.3] if $v_2v_6$ and $v_3v_7$ are red, then the cycle $v_1, v_6, v_4, v_3, v_5, v_2, v_7, v_1$ is blue.
\end{enumerate}

\item [2.] All edges $v_iv_4$, $i \in \{1,3,6,7\}$ are blue. Then we have two subcases:

\begin{enumerate}
\item [2.1] if $v_2v_5$, $v_1v_6$, $v_3v_7$ are red, then we obtain the following blue cycle: $v_1, v_5, v_3, v_4, v_6, v_2, v_7, v_1$,

\item [2.2] if $v_2v_6$, $v_1v_5$, $v_3v_7$ are red, then the cycle: $v_1, v_4, v_6, v_3, v_5, v_2, v_7, v_1$ is blue.

\end{enumerate}

\end{enumerate}
\noindent For all cases, there is always a blue $C_7$, so $F_7 \rightarrow (P_3, C_7)$ and the proof is complete.
\end{proof}

\subsection{Upper bounds for $r^{*}(P_3,C_n)$}
In \cite{dbu1} Silaban \emph{et al.} proved that $r^{*}(P_3, C_n) \leq 2n-1$.
In this section we will show that this upper bound can be improved and we prove the following theorem.

\begin{thm}
For $n \geq 12$, $n$ is even,
$$ r^{*}(P_3, C_n) \leq 2n-2.$$
\label{twg}
\end{thm}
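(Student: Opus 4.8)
The plan is to exhibit an explicit graph $F$ on $n$ vertices with exactly $2n-2$ edges and to prove $F \to (P_3,C_n)$. I first reduce the arrow relation to a purely Hamiltonian statement, exactly as in the proof of Theorem~\ref{twC7}: a $2$-colouring of $F$ has no red $P_3$ precisely when its red edges form a matching $M$, and since $|V(F)|=n$, a blue $C_n$ is the same thing as a Hamiltonian cycle of the blue graph $F-M$. Hence $F\to(P_3,C_n)$ is equivalent to the assertion that $F-M$ is Hamiltonian for every matching $M\subseteq E(F)$. This already forces $\delta(F)\ge 3$ (a degree-$2$ vertex with one incident red edge has blue-degree $1$), so with $2n-2$ edges $F$ must consist of four vertices of degree $3$ and $n-4$ vertices of degree $4$.

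For the construction I take $F=C_n^2$, the square of the cycle $v_1v_2\cdots v_nv_1$ (edges $v_iv_{i+1}$ and $v_iv_{i+2}$, indices mod $n$), and delete two disjoint short chords placed antipodally, say $v_1v_3$ and $v_{n/2+1}v_{n/2+3}$; this leaves $n+(n-2)=2n-2$ edges and exactly four degree-$3$ vertices. To prove robustness I use the reference Hamiltonian cycle $H_0=v_1v_2\cdots v_nv_1$, which survives because only chords were deleted. Given a matching $M$, I reroute $H_0$ around each cycle-edge $v_iv_{i+1}\in M$ by a local swap through the chords $v_iv_{i+2}$ and $v_{i+1}v_{i+3}$ (and their continuations when removed cycle-edges cluster together). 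The decisive point is that every chord needed by such a reroute has an endpoint already $M$-matched by the removed edge $v_iv_{i+1}$, so it cannot itself lie in $M$; consequently a reroute can fail only if one of its chords happens to be one of the two globally deleted edges. Since the deletions are antipodal and $n\ge 12$, each bounded rerouting window meets at most one of them, and the finitely many interfering configurations are dispatched by an alternative local detour.

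The main obstacle is the robustness verification for the densest matchings, and this is where the hypotheses bite. When $M$ is a perfect matching of alternate cycle-edges, $F-M$ collapses to a prism $C_{n/2}\,\square\,K_2$ with two of its cycle-edges removed, a cubic-minus-two-edges graph for which a bespoke Hamiltonian cycle must be produced directly; more generally, matchings that simultaneously attack the four degree-$3$ vertices leave little slack and force non-local rearrangements rather than the clean local swaps above. The crux is therefore to organise the matchings according to how they meet the two deleted chords and the degree-$3$ vertices, and to show that in every case the reference cycle can be repaired, using the antipodal placement together with $n\ge 12$ to guarantee that the two defects never cooperate inside a single rerouting window. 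Small even $n$, and in particular $n=10$, fall outside this uniform argument and are to be handled separately (consistent with the stronger claim announced in the abstract).
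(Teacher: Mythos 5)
Your opening reduction is fine: a $2$-coloring of $F$ avoids a red $P_3$ exactly when the red edges form a matching $M$, and since $|V(F)|=n$ a blue $C_n$ is a Hamiltonian cycle of $F-M$; this is precisely the viewpoint behind the paper's Theorem \ref{twC7} and Algorithm \ref{alg1}. The construction, however, is wrong, and it breaks exactly in the case you deferred. Take $n=12$, so $F=C_{12}^2-\{v_1v_3,\,v_7v_9\}$, and let $M=\{v_1v_2,v_3v_4,v_5v_6,v_7v_8,v_9v_{10},v_{11}v_{12}\}$; all of these are cycle edges, hence lie in $F$, and they form a perfect matching. In $F-M$ the vertices $v_1,v_3,v_7,v_9$ have degree $2$, so any Hamiltonian cycle would have to use all eight edges $v_{11}v_1$, $v_1v_{12}$, $v_2v_3$, $v_3v_5$, $v_5v_7$, $v_6v_7$, $v_8v_9$, $v_9v_{11}$. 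This saturates $v_5$ and $v_{11}$, which excludes $v_4v_5$ and $v_{10}v_{11}$, and then $v_4$ has only the edges $v_2v_4,v_4v_6$ left and $v_{10}$ only $v_8v_{10},v_{10}v_{12}$, so these are forced as well. The forced edges now contain the non-spanning cycle $v_2v_3v_5v_7v_6v_4v_2$, so $F-M$ has no Hamiltonian cycle at all. Coloring $M$ red and everything else blue therefore produces neither a red $P_3$ nor a blue $C_{12}$, i.e.\ $F\nrightarrow(P_3,C_{12})$. The ``bespoke Hamiltonian cycle'' you promised for the prism case does not exist for your antipodal placement: deleting two antipodal chords of the odd chord-cycle splits it into two paths of an odd number of vertices each (here $3$ and $3$), and degree-two forcing then closes a short cycle. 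So this is not a bookkeeping detail to be ``dispatched''; the deferred case is a genuine counterexample, and any variant of $C_n^2$ minus two edges would have to be re-verified against both alternating perfect matchings as well as all mixed matchings. (A side remark: $2n-2$ edges and $\delta\geq 3$ do not force four vertices of degree $3$ and the rest of degree $4$; the paper's extremal graph has six vertices of degree $3$ and two of degree $5$.)

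The paper avoids this trap by abandoning the circulant-type structure entirely. Its graph $F_n$ (Figure \ref{rysC12}) consists of two paths $u_1\cdots u_t$, $v_1\cdots v_t$ with $t=\frac{n-2}{2}$, joined between consecutive levels by all four edges $u_iu_{i+1},v_iv_{i+1},u_iv_{i+1},v_iu_{i+1}$, plus two degree-$3$ vertices $x,y$ attached at the ends. The point of this redundancy is that a matching can never cut a level: if $u_iu_{i+1}$ is red then $u_iv_{i+1}$ and $v_iu_{i+1}$ are blue, and if $u_iv_{i+1}$ is red then $u_iu_{i+1}$ and $v_iv_{i+1}$ are blue, so two vertex-disjoint blue paths always propagate from level $i$ to level $i+1$ (the paper's Fact 1); a short case analysis at $x$ and $y$ (Fact 2) then closes them into a blue $C_n$. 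In your graph the ``levels'' are single vertices, and an alternating matching collapses $F$ to a prism minus your two deleted chords, which is exactly where Hamiltonicity dies. If you want to salvage your approach, you would need a placement of the two deletions for which the prism-minus-two-edges graphs arising from both alternating perfect matchings remain Hamiltonian, and then still handle all remaining matchings; the paper's level-redundant construction sidesteps all of this.
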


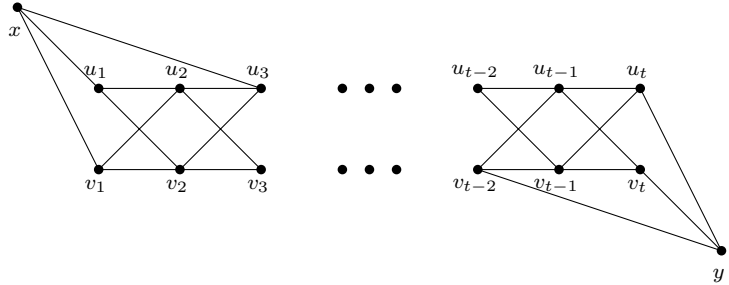
\begin{figure}[htb]
\begin{center}
\begin{tikzpicture}[scale=.18]

\begin{scriptsize}
\fill [color=black] (6,0) circle (10pt);
\draw[color=black] (5.8,-1.2) node {$v_1$};
\fill [color=black] (12,0) circle (10pt);
\draw[color=black] (11.8,-1.2) node {$v_2$};
\fill [color=black] (18,0) circle (10pt);
\draw[color=black] (17.8,-1.2) node {$v_3$};
\fill [color=black] (24,0) circle (10pt);
\fill [color=black] (26,0) circle (10pt);
\fill [color=black] (28,0) circle (10pt);
\fill [color=black] (34,0) circle (10pt);
\draw[color=black] (33.8,-1.2) node {$v_{t-2}$};
\fill [color=black] (40,0) circle (10pt);
\draw[color=black] (39.8,-1.2) node {$v_{t-1}$};
\fill [color=black] (46,0) circle (10pt);
\draw[color=black] (45.8,-1.2) node {$v_t$};
\fill [color=black] (6,6) circle (10pt);
\draw[color=black] (5.8,7.2) node {$u_1$};
\fill [color=black] (12,6) circle (10pt);
\draw[color=black] (11.8,7.2) node {$u_2$};
\fill [color=black] (18,6) circle (10pt);
\draw[color=black] (17.8,7.2) node {$u_3$};
\fill [color=black] (24,6) circle (10pt);
\fill [color=black] (26,6) circle (10pt);
\fill [color=black] (28,6) circle (10pt);
\fill [color=black] (34,6) circle (10pt);
\draw[color=black] (33.8,7.2) node {$u_{t-2}$};
\fill [color=black] (40,6) circle (10pt);
\draw[color=black] (39.8,7.2) node {$u_{t-1}$};
\fill [color=black] (46,6) circle (10pt);
\draw[color=black] (45.8,7.2) node {$u_t$};

\fill [color=black] (0,12) circle (10pt);
\draw[color=black] (-0.2,10.2) node {$x$};
\fill [color=black] (52,-6) circle (10pt);
\draw[color=black] (51.8,-7.8) node {$y$};

\draw
(6,0) -- (12,0)
(6,0) -- (12,6)
(6,0) -- (0,12)
(12,0) -- (6,6)
(12,0) -- (18,6)
(12,0) -- (18,0)
(18,0) -- (12,6)
(34,0) -- (40,0)
(34,0) -- (40,6)
(34,0) -- (52,-6)
(40,0) -- (34,6)
(40,0) -- (46,6)
(40,0) -- (46,0)
(46,0) -- (40,6)
(46,0) -- (52,-6)
(6,6) -- (12,6)

(6,6) -- (0,12)
(12,6) -- (18,6)
(18,6) -- (0,12)
(34,6) -- (40,6)
(40,6) -- (46,6)
(46,6) -- (52,-6)
;

\end{scriptsize}

\end{tikzpicture}
\caption{The graph $F_{n} \rightarrow (P_3,C_{n})$ for $n \geq 12$ and $n$ is even, $t = \frac{n-2}{2}$.}
\label{rysC12}
\end{center}
\end{figure}

\begin{proof}
Let $t = \frac{n-2}{2}$ and let $F_{n}$ be a graph with $$V(F_n) = \{x, y\} \cup \{u_i, v_i|i = 1, \ldots , t\}$$ and $$E(F_n) = \{xu_1, xv_1, xu_3, u_ty, v_ty, v_{t-2}y\}\cup S,$$ where
$$S = \{u_iu_{i+1}, v_iv_{i+1}, v_iu_{i+1}, u_iv_{i+1}|i = 1,\ldots , t - 1\}$$ (see Fig. \ref{rysC12}). In order to prove that $F_{n} \rightarrow (P_3,C_{n})$, let $\chi$ be any 2-coloring of edges of $F_{n}$ such that there is no red $P_3$ in $F_{n}$. We will show that the coloring $F_{n}$ will imply a blue $C_n$ in $F_{n}$.

\vspace{0.15cm}

\textbf{FACT 1.} Observe that if we have any two independent blue paths to $u_i$ and $v_i$, then we can extend these paths step by step to vertices $u_j$ and to $v_j$ for $1\leq i < j\leq t$. To do so, let us consider the vertex $u_i$. Since under the coloring $\chi$ there is no red $P_3$, at most one of edges $\{u_iu_{i+1}, u_iv_{i+1}\}$ can be red. If $u_iu_{i+1}$ is red, then $\{u_iv_{i+1}, v_iu_{i+1}\}$ must be blue. Using these 2 blue edges, we can extend our blue paths to $u_{i+1}$ and $v_{i+1}$, independently. If $u_iv_{i+1}$ is red, then $\{u_iu_{i+1}, v_iv_{i+1}\}$ must be blue. Using these 2 blue edges, we also can extend our blue paths to $u_{i+1}$ and $v_{i+1}$, independently. We can do the same process to extend our blue paths until reaching $u_j$ and  $v_j$.

\vspace{0.15cm}

\textbf{FACT 2.} There are always two independent blue paths from $x$ to $u_i$ and from $x$ to $v_i$ for $i = 1$ or $i = 3$. To prove this fact, let us consider the \mbox{the vertex $x$}. There are 3 incident edges to this vertex, at most one of them can be colored by red. Up to the symmetry of $F_{n}$, we
can assume that at most one edge of set $\{xu_1, xu_3\}$ is red.

If $xu_3$ is red, then $xu_1$ and $xv_1$ must be blue, therefore we have two blue paths from $x$ to $u_1$ and from $x$ to $v_1$. Note that a similar situation occurs if none of edges incidence to $x$ is red.

Now we can assume that $xu_1$ is red. In this case $xv_1$ and $xv_3$ are blue so we have one path from $x$ to $u_3$. We will construct a path of size $6$ with the set $\{u_1,u_2,v_1,v_2\}$) as inner vertices, namely the path from $x$ to $v_3$.
To do this consider the vertex $u_2$. Under the coloring $\chi$, at most one of edges $\{u_2u_3, u_2v_3, u_2v_1\}$ can be red. In all cases we obtain one among two possible blue paths from $x$ to $v_3$, namely $xv_1u_2u_1v_2v_3$ or $xv_1v_2u_1u_2v_3$.

Similarly, using the symmetry of $F_n$, we get two independent blue paths from $y$ to $u_j$ and from $y$ to $v_j$ for $j = t$ or $j = t-2$.

\vspace{0.5cm}

By using Fact 1 and 2, we obtain a blue cycle $C_n$ in $F$. Observe that the theorem holds for $3 \leq t-2$ and $n \geq 12$.
\end{proof}

Silaban \emph{et al.} \cite{dbu1} gave the upper bound for the restricted size Ramsey number of $P_3$ versus $P_n$. They proved that for even $n > 8$,
$r^{*}(P_3, P_n) \leq 2n - 1$. From the proof of Theorem \ref{twg} we see that if we delete edge $xu_3$ then for any 2-coloring of edges of $F_n \backslash \{xu_3\}$ that avoid red $P_3$, it must imply a blue $P_n$ in $F_n$. It means
we get a better upper bound of the restricted size Ramsey number for
$P_3$ versus $P_n$, $n \geq 12$ is even, as given in the following corollary.

\begin{cor}
For $n \geq 12$ and $n$ is even, $r^{*}(P_3, P_n) \leq 2n - 3$.
\end{cor}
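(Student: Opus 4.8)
The plan is to prove the corollary without any new case analysis, by reducing directly to Theorem~\ref{twg}. I would take the graph $F_n$ from that theorem and use $F' = F_n \setminus \{xu_3\}$ as the host graph for $P_3$ versus $P_n$. The first step is pure bookkeeping: $F_n$ has $2+2t = n$ vertices, and by the edge count $6+4(t-1)=2n-2$ implicit in the proof of Theorem~\ref{twg} it has $2n-2$ edges, so $F'$ has $n$ vertices and $2n-3$ edges. Since $r(P_3,P_n)=n$ (the classical path--path Ramsey value), $F'$ has exactly the order needed to be an admissible host for $r^*(P_3,P_n)$, and it then suffices to prove $F' \to (P_3,P_n)$.

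The heart of the argument is a recoloring trick. Given an arbitrary $2$-coloring $\chi$ of $E(F')$ with no red $P_3$, I would extend it to a coloring $\chi'$ of the full graph $F_n$ by painting the single missing edge $xu_3$ \emph{blue}. Because adding a blue edge can never create a red $P_3$, the red edges of $\chi'$ are precisely those of $\chi$, so $\chi'$ is still free of red $P_3$. Theorem~\ref{twg} then applies to $\chi'$ and yields a blue $C_n$ in $F_n$; as $F_n$ has only $n$ vertices, this is a blue Hamiltonian cycle $H$.

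It remains to pass from $H$ to a blue $P_n$ that lives in $F'$, i.e.\ one avoiding $xu_3$, and here I would simply split on whether $xu_3 \in E(H)$. If $xu_3 \notin E(H)$, then every edge of $H$ already lies in $F'$ and is blue under $\chi$ (it is blue under $\chi'$ and differs from $xu_3$), so $H$ is a blue $C_n \supseteq P_n$ in $F'$. If $xu_3 \in E(H)$, then $H - xu_3$ is a Hamiltonian path on all $n$ vertices; its $n-1$ edges are all distinct from $xu_3$, hence lie in $F'$ and are blue under $\chi$, giving the desired blue $P_n$. Either way $\chi$ forces a blue $P_n$, which establishes $F' \to (P_3,P_n)$ and therefore $r^*(P_3,P_n) \le |E(F')| = 2n-3$.

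I anticipate no real obstacle, precisely because this route sidesteps re-deriving the $x$-side subcases of Fact~2 in Theorem~\ref{twg} (where $xu_3$ was genuinely needed to \emph{close} the cycle in the subcase $xu_1$ red). The only points requiring care are the elementary observations that recoloring $xu_3$ blue preserves the no-red-$P_3$ condition, and that deleting one edge from a Hamiltonian cycle leaves a Hamiltonian path; everything else is the edge count and the value $r(P_3,P_n)=n$.
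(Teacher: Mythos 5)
Your proof is correct, and it takes a genuinely different route from the paper. The paper uses the same host graph $F_n \setminus \{xu_3\}$, but it justifies the corollary by re-reading the proof of Theorem~\ref{twg}: it asserts that, with $xu_3$ deleted, the same case analysis still forces a blue $P_n$, and gives no further details. That re-inspection is less innocent than it looks: in the subcase of Fact 2 where $xu_1$ is red, the edge $xu_3$ is precisely what supplies the second blue path out of $x$, so after deleting it one is left with a single blue path from $x$ that must pick up $u_3$ and all later columns. Such a lone path cannot in general sweep forward column by column (if the red matching contains two parallel edges between consecutive columns, the forward zigzag gets stuck), so a completed direct argument would have to allow the path to double back --- structure one gets for free from a Hamiltonian cycle minus an edge, but which the paper never spells out. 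Your recoloring argument sidesteps all of this by using Theorem~\ref{twg} purely as a black box: paint the missing edge $xu_3$ blue (which cannot create a red $P_3$), extract the guaranteed blue $C_n$, observe it is spanning since $|V(F_n)| = n$, and delete $xu_3$ from it if it occurs there. The remaining ingredients --- the edge count $|E(F_n)| = 2n-2$, the value $r(P_3,P_n) = n$, and the fact that a cycle minus an edge is a Hamiltonian path --- are all checked in your write-up. In short, what the paper obtains by an incompletely sketched proof surgery, you obtain as a clean consequence of the theorem's statement; your version is the more rigorous of the two.
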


\subsection{Computational Approach}

In this subsection we use a computational approach to determine the exact values of $r^{*}(P_3,C_n)$, $8 \leq n \leq 12$. We use the following Algorithm \ref{alg1} to find such numbers.

\begin{algorithm}
\caption{Deciding whether graph $F \rightarrow (P_3, C_n)$ or not}
\label{alg1}
\begin{algorithmic}[1]
\Statex \textbf{Require: } Adjacency matrix of biconnected graph $F$ on $n$ vertices.
\Statex \textbf{Ensure: } $F \rightarrow (P_3, C_n)$ or $F \nrightarrow (P_3, C_n)$.
\For {$m= \lfloor \frac{n}{2} \rfloor -2 \to  \lfloor \frac{n}{2} \rfloor$}
\For {every subset $S$ of $m$ edges that compose independent edge set}
\State $F' = F - S$
\State find a Hamiltonian cycle in $F'$
\If {no Hamiltonian cycle in $F'$} \Return {$F \nrightarrow (P_3, C_n)$, Break.} \EndIf
\EndFor
\EndFor
\State \textbf{return} $F \rightarrow (P_3, C_n)$
\end{algorithmic}
\end{algorithm}

We generate all the adjacency matrices of biconnected graphs with $n$ vertices ($8 \leq n \leq 12$) with minimum degree $3$ by using a program called {\it geng} \cite{MP}.

\begin{table}
\begin{center}
\begin{tabular}{|c||c||c||c||c||c|}
\hline
$n$ & $8$ & $9$ & $10$ & $11$ & $12$ \\
\hline
$r^{*}(P_3,C_n)$ & $15$ & $17$ & $18$ & $20$ & $22$ \\
\hline
$\#\{F \rightarrow (P_3, C_n) , |E (F) | = r^{*}(P_3,C_n)\}$
 & $10$ & $16$ & $2$ & $4$ & $8$ \\
\hline
\end{tabular}
\caption{Restricted size Ramsey numbers $r^{*}(P_3,C_n)$, $8 \leq n \leq  12$.}
\label{tab1}
\end{center}
\end{table}

From the above algorithm, we obtain the results which are presented in Table \ref{tab1}. This table provides the value of $r^{*}(P_3,C_n)$ and the number of non-isomorphic graphs $F$ of order $n$ and size $r^{*}(P_3,C_n)$ such that $F \rightarrow (P_3, C_n)$. Based on computer calculations, it turned out that the value of $m \in \{\lfloor \frac{n}{2} \rfloor -2, \lfloor \frac{n}{2} \rfloor -1,   \lfloor \frac{n}{2} \rfloor \}$. Examples of such graphs are presented in Fig. \ref{rysC9}, \ref{rysC10}, \ref{rysC11} and \ref{rysC12}. For the number $r^{*}(P_3,C_8)$ an example is a graph $K_{4,4}-e$.

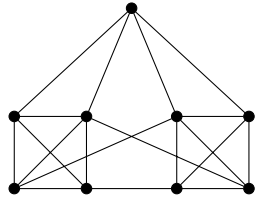
\begin{figure}[htb]
\begin{center}
\begin{tikzpicture}[scale=.12, transform shape]
\tikzstyle{every node}=[draw, circle, minimum size=33 pt,  fill=black];
\node (v1) at (0,0) {};
\node (v2) at (8,0) {} ;
\node (v3) at (0,8) {};
\node (v4) at (8,8) {};
\node (v5) at (18,0) {};
\node (v6) at (26,0) {};
\node (v7) at (18,8) {};
\node (v8) at (26,8) {};
\node (v9) at (13,20) {};
\draw (v1) -- (v2)
(v1) -- (v3)
(v1) -- (v4)
(v2) -- (v3)
(v2) -- (v4)
(v3) -- (v4)
(v5) -- (v6)
(v5) -- (v7)
(v5) -- (v8)
(v6) -- (v7)
(v6) -- (v8)
(v7) -- (v8)
(v4) -- (v6)
(v1) -- (v7)
(v3) -- (v9)
(v4) -- (v9)
(v7) -- (v9)
(v8) -- (v9)
(v2) -- (v5);
\end{tikzpicture}
\caption{Complement of the graph $F_9 \rightarrow (P_3,C_9).$}
\label{rysC9}
\end{center}
\end{figure}

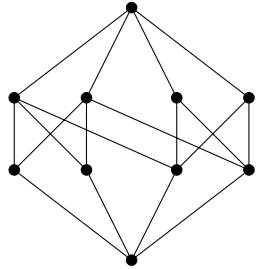
\begin{figure}[htb]
\begin{center}
\begin{tikzpicture}[scale=.12, transform shape]
\tikzstyle{every node}=[draw, circle, minimum size=33 pt, fill=black];
\node (v1) at (13,0) {};
\node (v2) at (0,10) {} ;
\node (v3) at (8,10) {};
\node (v4) at (18,10) {};
\node (v5) at (26,10) {};
\node (v6) at (0,18) {};
\node (v7) at (8,18) {};
\node (v8) at (18,18) {};
\node (v9) at (26,18) {};
\node (v10) at (13,28) {};
\draw (v1) -- (v2)
(v1) -- (v3)
(v1) -- (v4)
(v1) -- (v5)
(v2) -- (v6)
(v2) -- (v7)
(v3) -- (v6)
(v3) -- (v7)
(v4) -- (v8)
(v4) -- (v9)
(v5) -- (v8)
(v5) -- (v9)
(v4) -- (v6)
(v5) -- (v7)
(v6) -- (v10)
(v7) -- (v10)
(v8) -- (v10)
(v9) -- (v10);
\end{tikzpicture}
\caption{Graph $F_{10} \rightarrow (P_3,C_{10}).$}
\label{rysC10}
\end{center}
\end{figure}

\begin{figure}[htb]
\begin{center}
\begin{tikzpicture}[scale=.12, transform shape]
\tikzstyle{every node}=[draw, circle, minimum size=33 pt, fill=black];
\node (v1) at (10,0) {};
\node (v2) at (18,0) {} ;
\node (v3) at (26,0) {};
\node (v4) at (34,0) {};
\node (v5) at (10,8) {};
\node (v6) at (18,8) {};
\node (v7) at (26,8) {};
\node (v8) at (34,8) {};
\node (v9) at (2,14) {};
\node (v10) at (42,14) {};
\node (v11) at (22,20) {};

\draw (v1) -- (v2)
(v1) -- (v6)
(v1) -- (v9)
(v2) -- (v3)
(v2) -- (v5)
(v2) -- (v7)
(v3) -- (v4)
(v3) -- (v6)
(v3) -- (v8)
(v4) -- (v7)
(v4) -- (v10)
(v5) -- (v9)
(v5) -- (v6)
(v6) -- (v7)
(v6) -- (v11)
(v7) -- (v8)
(v7) -- (v11)
(v8) -- (v10)
(v9) -- (v11)
(v10) -- (v11);
\end{tikzpicture}
\caption{Graph $F_{11} \rightarrow (P_3,C_{11}).$}
\label{rysC11}
\end{center}
\end{figure}
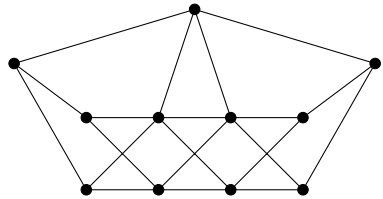

\section{Conclusion}
In this paper we established six new restricted size Ramsey numbers $r^{*}(P_3,C_n)$ for $7 \leq n \leq 12$. In addition, we gave the new upper bound for $n \geq 10$ and $n$ is even. It follows that the first open case of $r^{*}(P_3,C_n)$ is now $r^{*}(P_3,C_{13})$ and is certainly worth of further investigation. Based on results known earlier and described in this work as well as computer experiments for some bipartite graphs that are not presented here, let us formulate the following conjecture.

\begin{con}
For all $n \geq 10$, we have
$$r^{*}(P_{3},C_{n})=2n-2.$$
\end{con}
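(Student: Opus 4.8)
The plan is to exhibit an explicit graph $F_n$ on $n$ vertices with exactly $2n-2$ edges for which $F_n \rightarrow (P_3, C_n)$; since $r(P_3,C_n)=n$ by Theorem~\ref{tw1}, this gives $r^*(P_3,C_n)\le 2n-2$ at once. The natural candidate is a ``ladder'' reinforced by two apex vertices, as in Figure~\ref{rysC12}: take two columns $u_1,\dots,u_t$ and $v_1,\dots,v_t$ with $t=\frac{n-2}{2}$, join each pair of consecutive columns $\{u_i,v_i\}$ and $\{u_{i+1},v_{i+1}\}$ by all four edges $u_iu_{i+1},v_iv_{i+1},u_iv_{i+1},v_iu_{i+1}$, and add apexes $x$ (adjacent to $u_1,v_1,u_3$) and $y$ (adjacent to $u_t,v_t,v_{t-2}$). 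A direct count gives $6+4(t-1)=4t+2=2n-2$ edges, as required.

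The first reduction is the standard one: a coloring with no red $P_3$ is precisely one whose red edges form a matching, so I must show that deleting any matching from $F_n$ still leaves a Hamiltonian cycle, which will be the blue $C_n$. The engine of the proof is a propagation step along the ladder: if two vertex-disjoint blue paths have reached $u_i$ and $v_i$, they can be extended independently to $u_{i+1}$ and $v_{i+1}$. This is because the four rung edges between columns $i$ and $i+1$ form a $K_{2,2}$; since the red edges form a matching, at most one red edge leaves each of $u_i$ and $v_i$, and a case check shows that one of the two perfect matchings of this $K_{2,2}$ is entirely blue. Iterating the step carries the pair of disjoint blue paths across any block of consecutive columns.

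It remains to attach the apexes and splice. At $x$ there are only three incident edges, so at most one is red; when no red edge meets column $1$ one simply starts the two paths along $xu_1$ and $xv_1$, reaching column $1$. The delicate case is when $xu_1$ is red: then $x$ meets the ladder in blue only through $v_1$ and $u_3$, so I would keep the direct blue path from $x$ to $u_3$ and build a second blue path that threads through the first two rungs to cover $u_1,u_2,v_1,v_2$ and arrive at $v_3$. A short case analysis on which of the edges at $u_2$ is red, again using that red is a matching, shows that one of the two explicit detours $x\,v_1\,u_2\,u_1\,v_2\,v_3$ or $x\,v_1\,v_2\,u_1\,u_2\,v_3$ is blue, so the two disjoint paths reach column $3$. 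The symmetric analysis at $y$, with $v_{t-2}y$ playing the role of $xu_3$, produces two disjoint blue paths landing on column $t$ or column $t-2$. Bridging the $x$-columns and the $y$-columns by the propagation step and closing through $y$ completes a blue Hamiltonian cycle.

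The main obstacle is this apex bookkeeping rather than the ladder, which is robust. The three-edge apices leave little slack, and one must verify that the forced detour can always be completed without colliding with the red matching or revisiting a ladder vertex. The role of the hypothesis is exactly here: $t-2\ge 3$, that is $n\ge 12$, guarantees that the furthest column reached from $x$ (column $3$, in the detour case) does not lie beyond the furthest column reached from $y$ (column $t-2$), so the two apex path-pairs either meet at a common column and splice directly or are linked cleanly by the intervening propagation.
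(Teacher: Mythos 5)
The statement you are addressing is the paper's closing \emph{Conjecture}, which asserts the equality $r^{*}(P_3,C_n)=2n-2$ for \emph{all} $n\geq 10$, odd as well as even. Your proposal establishes only one half of one case of this: the upper bound $r^{*}(P_3,C_n)\leq 2n-2$, and only for even $n\geq 12$, since the construction needs $t=\frac{n-2}{2}$ to be an integer and your own closing paragraph requires $t-2\geq 3$, i.e.\ $n\geq 12$. That part of your argument is correct, and it is essentially identical to the paper's proof of Theorem~\ref{twg} (same ladder-plus-apexes graph, same matching reduction, same propagation and apex case analysis). But it does not prove the statement, and no construction of this kind can: exhibiting a single good graph $F_n$ can only ever yield an upper bound on $r^{*}(P_3,C_n)$.

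What is missing is, first, the lower bound $r^{*}(P_3,C_n)\geq 2n-2$, which would require showing that \emph{every} graph on $n$ vertices with at most $2n-3$ edges admits a matching whose removal destroys all Hamiltonian cycles (equivalently, a $2$-coloring with no red $P_3$ and no blue $C_n$). The best lower bound known in the paper is Theorem~\ref{tw3}'s $\frac{3}{2}n+2$ for even $n\geq 8$, which falls well short of $2n-2$; the only values at which the equality is actually known are $n=10,11,12$, and those were obtained by exhaustive computer search (Table~\ref{tab1}), not by the ladder construction. Second, the odd case is entirely untouched: for odd $n\geq 13$ your graph does not exist, and not even the upper bound $2n-2$ is established in the paper for odd $n$. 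Third, the cases $n=10$ and $n=11$ lie outside the range of your argument altogether. In short, you have reproduced the paper's Theorem~\ref{twg}, which is evidence \emph{for} the conjecture, but the statement itself remains open precisely because the lower bound and the odd case are not known.
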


\section{Acknowledgment}
We would like to thank the student of the University of Gda\'nsk Maciej Godek for the independent performance of some computer experiments that confirmed the correctness of the results contained in the article.

\end{document}